\documentclass{amsart}
\usepackage{amsmath}
\usepackage{amssymb}
\usepackage{graphicx}
\input xy
\xyoption{all}
\newtheorem{theorem}{Theorem}

\newtheorem{lemma}[theorem]{Lemma}
\newtheorem{corollary}[theorem]{Corollary}
\newtheorem{proposition}[theorem]{Proposition}
\theoremstyle{definition}
\newtheorem{remark}{Remark}

\newcommand{\Char}{\mbox{\rm char}}

 \newcommand{\sign}{\operatorname{sign}}

\newcommand{\Z}{\mathbb{Z}}

\begin{document}
\title[Generalized algebraic rational identity]{Generalized algebraic rational identities\\ of subnormal subgroups in division rings} 
\author[Bui Xuan Hai]{Bui Xuan Hai}\author[Mai Hoang Bien]{Mai Hoang Bien}\author[Truong Huu Dung]{Truong Huu Dung}
\keywords{ Division ring; generalized algebraic rational identity; subnormal subgroup. \\
\protect \indent 2010 {\it Mathematics Subject Classification.} 16R50, 16K40.}

\maketitle

\begin{abstract} 
In this note, we introduce a new concept of a {\it generalized algebraic rational identity} to investigate the structure of division rings. The main theorem asserts that if a non-central subnormal subgroup $N$ of the multiplicative group $D^*$ of a division ring $D$ with center $F$ satisfies a non-trivial generalized algebraic rational identity of bounded degree, then $D$ is a finite dimensional vector space over $F$. This generalizes some previous results. 
\end{abstract}
\vspace*{1cm}

Let $F$ be a field and $A$ be an $F$-algebra. If $A$ is finite dimensional over $F$, then $A$ is both algebraic and finitely generated as $F$-algebra. In 1941, Kurosh posed a famous problem (\cite[Problem R]{Kurosh}) by asking that  whether $A$ is finite dimensional provided $A$ is algebraic and finitely generated over $F$. It is well-known that the Kurosh Problem  was solved negatively by Golod and Shafarevich in \cite{gol-sha}: they have constructed  an example of an infinite dimensional  finitely generated algebraic algebra. However, the particular case when $A$ is a division ring remains unsolved up to present: we do not know whether there exists a division ring algebraic over the center and finitely generated which is infinite dimensional. This case is usually refered as the Kurosh Problem for division rings \cite[Problem K]{Kurosh}. Now, it is natural to ask what properties whose occurrence in an algebra $A$ would lead $A$ to be finite dimensional over its center. In \cite{kap}, Kaplansky proved that every primitive algebraic algebra of bounded degree is finite dimensional (see \cite[Theorem 4]{kap}). Recently, Bell et {\it al.} considered the left algebraicity and they proved the analogue theorem for division rings with this property \cite{bell}. In \cite{ak}, it was proved that if in a division ring $D$ with center $F$ all additive commutators $xy-yx$ are algebraic over $F$ of bounded degree, then $D$ is finite dimensional. An analogue result for multiplicative commutators was also obtained in \cite{chebo}. Namely, $D$ is finite dimensional in case when either $xyx^{-1}y^{-1}$ are algebraic over $F$ of bounded degree for all $x,y\in D^*$ or $\Char (D)=0$ and there exists a non-central element $a$ such that $axa^{-1}x^{-1}$ are  algebraic over $F$ of bounded degree for all $x\in D$. Further, Markar-Limanov, Chiba \cite{chiba,Pa_Ma_88} and some other authors study more general problem. In fact, they study the properties whose occurrence in a subnormal subgroup of $D^*$ entails $D$ to be finite dimensional over $F$. In particular, it was proved in \cite{chiba,Pa_Ma_88} that if $F$ is infinite and $D$ contains a non-central subnormal subgroup satisfying some generalized rational identities, then $D$ is finite dimensional over $F$. 

The aim of the present note is to give a property which would include altogether the properties given for subgroups in the results mentioned  above. This idea leads us to the concept of {\it generalized algebraic rational identities} in a non-central subnormal subgroup of $D^*$. From the main result we get in Theorem \ref{thm:1.1} below, one can easily deduce several previous well-known results. 

Let $D$ be a division ring with  center $F$ and $F\langle X \rangle$ be a free $F$-algebra on a finite set $X=\{x_1,x_2,\ldots,x_m\}$ of non-commuting indeterminates $x_1,x_2,\ldots,x_m$. We denote by $D\langle X\rangle=D*_FF\langle X\rangle$ the free product of $D$ and $F\langle X\rangle$ over $F$ and by $D(X)$ the universal division ring of fractions of $D\langle X\rangle$.  We call an element $f(x_1,x_2,\ldots,x_m)\in D(X)$ a {\it generalized rational expression} (or {\it generalized rational polynomial}) over $D$. We refer to \cite[Chapter 7]{beidar} and \cite[Chapter 8]{Rowen} for more details on generalized rational polynomials. 

Let $S$ be a subset of $D$ and  $f(x_1,x_2,\ldots,x_m)$ be a generalized rational polynomial over $D$. We say that $f$ is a {\it generalized algebraic rational identity} (briefly, \textit{GARI}) of $S$ (or  $S$ \textit{satisfies the GARI} $f$) if $ f(c_1,c_2,\cdots,c_m)$ is algebraic over $F$ whenever $f$ is defined at $(c_1,c_2,\cdots,c_m) \in S^m$. A GARI $f$ of $S$ is called \textit{non-trivial} if there exists a division ring $D_1$ with center $F$ such that $D_1$ contains all coefficients of $f$ and $f$ is not a GARI of $D_1$. We say that $f(x_1,x_2,\cdots,x_m)$ is  a \textit{generalized rational identity} (briefly, \textit{GRI}) of $S$ over $D$ (or $S$ satisfies GRI $f=0$) if $f(c_1,c_2,\cdots,c_m)=0$ whenever $f$ is defined at $(c_1,c_2,\cdots,c_m)\in S^m$. Clearly, if $f$ is a GRI of $S$, then $f$ is a GARI of $S$. A GRI $f$ of $S$ over $D$ is called \textit{non-trivial} if there exists a division ring $D_1$ containing all coefficients of $f$ such that $f$ is not a GRI of $D_1$. Observe that in a division ring $D$ which is infinite dimensional over its center, if $S$ is a non-central subnormal subgroup of the multiplicative group $D^*$ of $D$, then there exists $(c_1,c_2,\cdots,c_m)\in S^m$ such that $f$ is defined at $(c_1,c_2,\cdots,c_m)$ \cite{chiba}. 

For a given positive integer $n$, let  $x,y_1,\ldots, y_n$ be  $n+1$ non-commuting indeterminates.  Consider the following generalized rational expression 
$$g_n(x,y_1,y_2,\ldots, y_n)=\sum\limits_{\sigma  \in {S_{n + 1}}} {\sign(\sigma ){x^{\sigma (0)}}{y_1}{x^{\sigma (1)}}{y_2}{x^{\sigma (2)}} \ldots {y_n}{x^{\sigma (n)}}}, $$ where $S_{n+1}$ is the symmetric group defined on the set  $\{\,0,1,\ldots, n\,\}$.  The following result is standard and describes algebraic elements of bounded degree.

\begin{lemma}\label{2.3} Let $D$ be a division ring with center $F$. For any element $a\in D$, the following statements are equivalent:
	\begin{enumerate}
		\item The element $a$ is algebraic over $F$ of degree $\le n$.
		\item $g_n(a,y_1,y_2,\ldots, y_n)=0$ is a GRI on $D$.
	\end{enumerate} 
\end{lemma}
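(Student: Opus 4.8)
The plan is to translate both conditions into linear algebra over $F$. Since $\dim_F F[a]$ equals the degree of the minimal polynomial of $a$ (and is infinite when $a$ is transcendental), condition (1) is equivalent to saying that $1,a,a^2,\dots,a^n$ are linearly dependent over $F$. So I would first reduce the lemma to proving: $g_n(a,y_1,\dots,y_n)=0$ is a GRI on $D$ if and only if $1,a,\dots,a^n$ are $F$-linearly dependent.

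For the direction \emph{dependent $\Rightarrow$ identity}, the key point is that, for fixed $y_1,\dots,y_n\in D$, the expression $\Phi(w_0,\dots,w_n):=\sum_{\sigma\in S_{n+1}}\sign(\sigma)\,w_{\sigma(0)}y_1w_{\sigma(1)}y_2\cdots y_nw_{\sigma(n)}$ is an $F$-multilinear, alternating function of the $(n+1)$-tuple $(w_0,\dots,w_n)\in D^{n+1}$. Multilinearity holds because the scalars lie in the center; the alternating property follows because, if $w_p=w_q$ with $p\ne q$, then composing each $\sigma$ with the transposition exchanging the values $p$ and $q$ sorts the terms into sign-reversing pairs with identical summands, so the sum is $0$. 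A multilinear alternating function vanishes on every linearly dependent tuple, so specializing $w_i=a^i$ shows that $g_n(a,y_1,\dots,y_n)=0$ whenever $1,a,\dots,a^n$ are $F$-linearly dependent.

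For the converse I would argue contrapositively, and in fact establish the more flexible statement by induction on $k$: whenever $w_0,\dots,w_k\in D$ are $F$-linearly independent, the generalized polynomial $h_k(y_1,\dots,y_k):=\sum_{\sigma\in S_{k+1}}\sign(\sigma)\,w_{\sigma(0)}y_1w_{\sigma(1)}\cdots y_kw_{\sigma(k)}$ is not identically zero on $D$; the case $w_i=a^i$ then yields the claim. The base case $k=0$ is just $w_0\ne 0$. For the step, I would group the terms of $h_k$ according to the value $j=\sigma(k)$ occupying the last slot, obtaining $h_k(y_1,\dots,y_k)=\sum_{j=0}^k\varepsilon_j\,H_j(y_1,\dots,y_{k-1})\,y_k\,w_j$, where $\varepsilon_j=\pm1$ and $H_j$ is, up to sign, the degree-$(k-1)$ instance of the same construction for the (still independent) sublist $(w_i)_{i\ne j}$. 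By the induction hypothesis $H_0$ is not an identity, so I may choose $y_1^0,\dots,y_{k-1}^0$ with $H_0(y^0)\ne 0$; then $h_k(y^0,y_k)=\sum_j d_j\,y_k\,w_j$ with $d_j=\varepsilon_j H_j(y^0)$ and $d_0\ne 0$.

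To finish I would invoke the standard fact from the theory of generalized polynomial identities: a one-variable expression $\sum_j d_j\,y\,w_j$ with the $w_j$ linearly independent over $F=Z(D)$ vanishes for all $y\in D$ only if every $d_j=0$. One proves this by taking a counterexample with fewest nonzero coefficients, normalizing $d_0=1$, noting that $f(cy)-c\,f(y)=\sum_j(d_jc-cd_j)\,y\,w_j$ still vanishes identically while its $j=0$ term has disappeared, deducing $d_j\in Z(D)=F$ for $j\ge 1$ by minimality, and then contradicting the independence of the $w_j$ by putting $y=1$. Applied to $\sum_j d_j\,y_k\,w_j$ with $d_0\ne 0$, it produces a $y_k$ with $h_k(y^0,y_k)\ne 0$, closing the induction. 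I expect this one-variable lemma to be the only real obstacle — it is precisely where invertibility in $D$ and the equality $Z(D)=F$ are used — whereas the sign constants $\varepsilon_j$ and the exact identification of the $H_j$ are harmless, since only $d_0\ne 0$ is needed.
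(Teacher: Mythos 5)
Your proof is correct, but it is genuinely different in character from what the paper does: the paper gives no argument at all and simply cites \cite[Corollary 2.3.8]{beidar}, where this characterization of algebraicity of bounded degree via the Capelli-type expression $g_n$ is a standard result in the theory of generalized polynomial identities. Your route reproves that corollary from scratch: the easy direction via the observation that $(w_0,\dots,w_n)\mapsto\sum_{\sigma}\sign(\sigma)w_{\sigma(0)}y_1\cdots y_nw_{\sigma(n)}$ is $F$-multilinear and alternating (so it kills the dependent tuple $(1,a,\dots,a^n)$), and the hard direction by induction on the number of independent elements $w_0,\dots,w_k$, peeling off the last slot to reduce to the one-variable statement that $\sum_j d_j\,y\,w_j\equiv 0$ on $D$ with the $w_j$ independent over $F=Z(D)$ forces all $d_j=0$, which you prove by the classical minimal-counterexample argument ($f(cy)-cf(y)$ pushes the coefficients into the center, then $y=1$ contradicts independence). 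All the steps check out, including the sign bookkeeping you correctly dismiss as irrelevant since only $d_0\neq 0$ matters, and the observation that condition (1) is exactly $F$-linear dependence of $1,a,\dots,a^n$. What the paper's citation buys is brevity; what your argument buys is a self-contained, elementary proof whose only nontrivial ingredient is the one-variable independence lemma, which is itself the basic lemma underlying the cited chapter of Beidar--Martindale--Mikhalev. One could shorten your write-up slightly by quoting that lemma directly instead of reproving it, but as written nothing is missing.
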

\begin{proof} See \cite[Corollary 2.3.8]{beidar}.
\end{proof}

Let $D$ be a division ring with center $F$ and $\phi$ be a ring automorphism of $D$. We write $D((\lambda,\phi))$ for the ring of skew Laurent series $\sum\limits_{i = n}^\infty  {{a_i}{\lambda^i}}$, where $n\in \Z, a_i\in D$, with the multiplication defined by the twist equation $\lambda a=\phi(a)\lambda$ for every $a\in D$. If $\phi=Id_D$, then we write $D((\lambda))$ instead of $D((\lambda,Id_D))$. It is known that  $D((\lambda,\phi))$ is a division ring (see \cite[Example 1.8]{lam}). Moreover, we have the following results.

\begin{lemma}\label{lem:2.1}{\rm \cite[Lemma 2.1]{bien_13}} Let $D$ be a division ring with center $F$. Assume that $K=\{\, a\in D\mid \phi(a)=a\}$ is the fixed division subring of $\phi$ in $D$. If the center $k=Z(K)$ of $K$ is contained in $F$, then the center of $D((\lambda,\phi))$ is 
	
	$$Z(D((\lambda,\phi)))=\left\{ {\begin{array}{*{20}{c}}
		k&{\text{ if } \phi \text{ has infinite order, }}\\
		{k(({\lambda^s}))}&{\text{ if } \phi \text{ has a finite order } s.}
		\end{array}} \right.$$
	In particular, the center of $D((\lambda))$ is $F((\lambda))$.  
\end{lemma}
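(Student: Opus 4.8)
The plan is to compute the center of $R:=D((\lambda,\phi))$ by determining directly which skew Laurent series $z=\sum_{i\ge m}a_i\lambda^i$ commute with everything. Since every element of $R$ is a Laurent series with coefficients in $D$, a routine coefficientwise check (for a fixed power of $\lambda$, the relevant sum is finite) shows that $z$ is central iff $z$ commutes with $\lambda$ and with every $a\in D$. From $z\lambda=\lambda z$ together with the twist relation $\lambda a=\phi(a)\lambda$, comparing coefficients of $\lambda^{i+1}$ yields $\phi(a_i)=a_i$ for every $i$, i.e. every coefficient of $z$ lies in $K$. From $za=az$ for all $a\in D$, using $\lambda^i a=\phi^i(a)\lambda^i$ and comparing coefficients of $\lambda^i$, one gets $a_i\phi^i(a)=a\,a_i$ for all $a\in D$; since $D$ is a division ring, for each $i$ with $a_i\ne 0$ this says precisely that $\phi^i$ is the inner automorphism $x\mapsto a_i^{-1}xa_i$ implemented by an element of $K$.

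The crux is then the following observation, which is where the hypothesis $k\subseteq F$ is used: if $0\ne b\in K$ satisfies $\phi^i(x)=b^{-1}xb$ for all $x\in D$, then $b\in F$ and hence $\phi^i=\operatorname{id}$. Indeed, every $y\in K$ is fixed by $\phi$, hence by $\phi^i$, so $y=b^{-1}yb$, i.e. $b$ centralizes $K$; since also $b\in K$ this gives $b\in Z(K)=k\subseteq F=Z(D)$, so conjugation by $b$ is trivial. Combining this with the previous paragraph: for a central $z$, every index $i$ with $a_i\ne 0$ forces $\phi^i=\operatorname{id}$ (the case $i=0$ being vacuous, but there $a_0a=aa_0$ already gives $a_0\in F$, so $a_0\in K\cap F=k$, the last equality again using $k\subseteq F$).

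Now I would split into the two cases. If $\phi$ has infinite order, then $\phi^i=\operatorname{id}$ only for $i=0$, so $z=a_0$ with $a_0\in k$; conversely $k$ is visibly contained in $Z(R)$, giving $Z(R)=k$. If $\phi$ has finite order $s$, then $\phi^i=\operatorname{id}$ exactly when $s\mid i$, so $z=\sum_j a_{sj}(\lambda^s)^j$ with every $a_{sj}\in k$; since $\lambda^s$ is central (because $\phi^s=\operatorname{id}$) and $k$ is central, $k((\lambda^s))\subseteq Z(R)$, whence $Z(R)=k((\lambda^s))$. The final assertion is the special case $\phi=\operatorname{id}_D$: then $K=D$, $k=F$, the hypothesis $k\subseteq F$ is automatic, and $s=1$, so $Z(D((\lambda)))=F((\lambda))$.

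I do not expect a serious technical obstacle; the one point that must be handled carefully is the observation in the second paragraph, and in particular the realization that it genuinely requires $k\subseteq F$. Without that hypothesis the statement is false, since, for instance, if $\phi$ is the inner automorphism $x\mapsto a^{-1}xa$ by a non-central $a$, then $a\lambda$ is a central element of $R$ not covered by the stated formula. The only mildly fussy ingredient is the initial reduction to commuting with $\lambda$ and with $D$, but this is standard and amounts to noting that each coefficient of $zw$ and of $wz$ is a finite sum.
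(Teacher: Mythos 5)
Your proof is correct: the reduction to commuting with $D$ and with $\lambda$, the coefficientwise conditions $a_i\in K$ and $a_i\phi^i(a)=aa_i$, and the key observation that $k\subseteq F$ forces any such $a_i$ into $F$ with $\phi^i=\operatorname{id}$ together give exactly the stated description of $Z(D((\lambda,\phi)))$ in both the infinite-order and finite-order cases. The paper itself offers no proof of this lemma (it is quoted from \cite[Lemma 2.1]{bien_13}), and your direct computation is the standard argument for that result, so there is nothing to compare beyond noting that your aside on the necessity of $k\subseteq F$ (the inner-automorphism example with $a\lambda$ central) is a nice sanity check.
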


\begin{lemma}\label {3.1} Let $D$ be a division ring with center $F$. An element $\alpha=a_1\lambda+a_2\lambda^2+\ldots$ in $D((\lambda))$ is algebraic over $F$ if and only if $\alpha=0$.
\end{lemma}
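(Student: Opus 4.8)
\noindent\textbf{Proof plan.} The plan is to use the natural $\lambda$-adic valuation on $D((\lambda))$. For a nonzero series $\beta=\sum_{i\ge n}b_i\lambda^i\in D((\lambda))$ with $b_n\neq 0$, I set $v(\beta)=n$. Because $D$ is a division ring it has no zero divisors, so the lowest-degree coefficient of a product is the product of the lowest-degree coefficients; hence $v(\beta\gamma)=v(\beta)+v(\gamma)$ for all nonzero $\beta,\gamma$, and in particular $v(\beta^k)=k\,v(\beta)$. One also has the ultrametric inequality $v(\beta+\gamma)\ge\min\{v(\beta),v(\gamma)\}$ directly from the series form, and $v(c)=0$ for every $c\in D^{*}$ (viewing $c$ as the series $c\lambda^0$), in particular for every $c\in F^{*}$. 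I will also use that every nonzero element of $D((\lambda))$ is invertible: factor out the relevant power of the central element $\lambda$ and invert the resulting unit of $D[[\lambda]]$ by a geometric series.

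Granting this, the ``if'' direction is trivial, so suppose $\alpha=a_1\lambda+a_2\lambda^2+\cdots$ is algebraic over $F$ and $\alpha\neq 0$; then $v(\alpha)\ge 1$. Pick any nonzero $p\in F[t]$ with $p(\alpha)=0$. Since $F$ is central, $F[\alpha]$ is commutative, so I may write $p(t)=t^{e}q(t)$ with $q(0)\neq 0$; as $\alpha$ is invertible, $p(\alpha)=\alpha^{e}q(\alpha)=0$ forces $q(\alpha)=0$, and $q$ cannot be a nonzero constant (otherwise $q(\alpha)=q(0)\neq 0$). Replacing $p$ by $q$, I may therefore assume $p(t)=c_0+c_1t+\cdots+c_dt^{d}$ with $d\ge 1$, $c_0\neq 0$, and $c_d\neq 0$.

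Finally I rewrite $p(\alpha)=0$ as $c_0=-(c_1\alpha+c_2\alpha^{2}+\cdots+c_d\alpha^{d})$. The left-hand side has valuation $v(c_0)=0$. On the right-hand side, every term with $c_k\neq 0$ ($1\le k\le d$) satisfies $v(c_k\alpha^{k})=v(c_k)+k\,v(\alpha)\ge k\ge 1$, so by the ultrametric inequality the right-hand side has valuation $\ge 1$ — a contradiction. Hence $\alpha=0$. The whole argument is routine once the valuation is set up; the only points needing care are the multiplicativity $v(\beta\gamma)=v(\beta)+v(\gamma)$, which is exactly where the hypothesis that $D$ is a division ring is used, and the reduction of the algebraic relation to one with nonzero constant term, which uses that $\alpha$ is invertible.
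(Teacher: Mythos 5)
Your proof is correct and is essentially the paper's argument in more explicit dress: the paper takes the minimal polynomial of $\alpha$ (which has nonzero constant term $t_0$, for the same invertibility reason you give) and observes that since $\alpha$ involves only positive powers of $\lambda$, the relation $g(\alpha)=0$ forces $t_0=0$, a contradiction. Your $\lambda$-adic valuation setup is just a formalized way of making that same comparison of lowest-order coefficients.
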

\begin{proof} Suppose that $\alpha \ne 0$ and  $g(x) =t_0 +t_1x + \cdots + t_n x^n \in F[x]$ is the minimal polynomial of $\alpha$ over $F$.  Then, the equality $$0=t_0 + t_1(a_1\lambda +a_2 \lambda^2 + \cdots) +t_2 (a_1\lambda +a_2 \lambda^2 + \cdots)^2  + \cdots + t_n (a_1\lambda +a_2 \lambda^2 + \cdots)^n$$ implies $t_0=0$, that is impossible. 
\end{proof}

For a division ring $D$ with the center $F$, let us consider a countable set of indeterminates $\{\,\lambda _i\mid i\in \Z\,\}$ and  a family of division rings which is constructed by setting 
$$D_0=D((\lambda _0)), D_1 =D_0((\lambda _1)),$$  $$D_{-1}=D_1((\lambda _{-1})), D_2=D_{-1}((\lambda _{2})),$$ 
for any $n>1,$ $$ D_{-n}=D_n((\lambda _{-n})),D_{n+1}=D_{-n}((\lambda _{n+1})).$$ 
Clearly, $D_{\infty}=\bigcup\limits_{n=-\infty}^{+\infty} {{D_n}}$ is a division ring. By Lemma~\ref{lem:2.1}, it is not hard to prove by induction on $n\ge 0$ that the center of $D_0$ is $F_0=F((\lambda _0))$, the center of $D_{n+1}$ is $F_{n+1}=F_{-n}((\lambda _{n+1}))$ and the center of $D_{-n}$ is $F_{-(n+1)}=F_{n+1}((\lambda _{-(n+1)}))$.  In particular, $F$  is contained in $Z(D_\infty)$. Consider the map $f: D_\infty\longrightarrow D_\infty$ which is defined by $f(a)=a$ for any $a\in D$ and $f(\lambda _i)=\lambda _{i+1}$ for any $i\in \Z$ is an automorphism of $D_\infty$. 

\begin{proposition}\label{pro:2.5} The center of $D_\infty((\lambda ,f))$ is $F$.  
\end{proposition}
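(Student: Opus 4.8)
The plan is to apply Lemma~\ref{lem:2.1} to the skew Laurent series ring $D_\infty((\lambda,f))$, so the whole task reduces to identifying the fixed division subring $K=\{\,a\in D_\infty\mid f(a)=a\,\}$ and checking the hypothesis of that lemma. First I would argue that $f$ has infinite order on $D_\infty$: since $f(\lambda_i)=\lambda_{i+1}$, no power $f^s$ with $s\ge 1$ can fix any $\lambda_i$, hence $f^s\ne \mathrm{Id}$. So by Lemma~\ref{lem:2.1} we will have $Z(D_\infty((\lambda,f)))=Z(K)$, provided we can check that $Z(K)\subseteq Z(D_\infty)$; in fact I expect equality $Z(K)=F$ to drop out once $K$ is pinned down.

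The key step is therefore to show $K=\{\,a\in D_\infty\mid f(a)=a\,\}=D$. The inclusion $D\subseteq K$ is immediate from the definition $f|_D=\mathrm{Id}$. For the reverse inclusion, take $a\in D_\infty$ with $f(a)=a$. Then $a$ lies in some $D_n$, i.e. $a$ is a (finite-tailed) Laurent series in the finitely many variables $\lambda_j$ occurring up to stage $n$. The plan is to look at the ``highest'' variable actually appearing in $a$: applying $f$ shifts every index up by one, so $f(a)$ involves a variable of strictly larger index than $a$ does, unless $a$ involves no $\lambda_j$ at all, i.e. $a\in D$. To make this precise I would set up the filtration by variables carefully — e.g. write $D_n$ as an iterated Laurent series ring and track, for a nonzero $a$, the largest index $i$ such that $\lambda_i$ appears with a nonzero coefficient somewhere in the expansion of $a$; then $f(a)=a$ forces that index set to be empty. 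Equivalently, one can induct on the construction: if $a\in D_{n+1}=D_{-n}((\lambda_{n+1}))$ and $f(a)=a$, comparing the lowest-order term in $\lambda_{n+1}$ on both sides (noting $f$ sends $D_{-n}$ into a ring not involving $\lambda_{n+1}$ but involving $\lambda_{n+2}$) forces $a$ to have no $\lambda_{n+1}$-part, i.e. $a\in D_{-n}$; repeating drives $a$ down to $D$.

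Once $K=D$ we have $Z(K)=Z(D)=F$, and $F\subseteq F=Z(D)\subseteq Z(D_\infty)$, so the hypothesis $k=Z(K)\subseteq Z(D_\infty)$ of Lemma~\ref{lem:2.1} holds. Since $f$ has infinite order, the lemma gives $Z(D_\infty((\lambda,f)))=k=F$, which is exactly the assertion of Proposition~\ref{pro:2.5}.

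The main obstacle I anticipate is the bookkeeping in the step $K=D$: elements of $D_\infty$ are infinite Laurent series (in $\lambda_0$, then again in $\lambda_1$, etc.), so one must be careful that ``the highest-index variable appearing'' is well-defined and that applying $f$ genuinely raises it — in particular that $f$ does not accidentally fix an element through some cancellation across infinitely many terms. Handling this cleanly, either via the iterated-Laurent-series filtration or by a clean induction on $n$ with a lowest-order-term comparison in the newest variable, is the part that needs the most care; everything else is a direct invocation of Lemmas~\ref{lem:2.1} and (implicitly) the center computations already recorded before the proposition.
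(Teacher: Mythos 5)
Your proposal is correct and follows essentially the same route as the paper: identify the fixed division ring of $f$ in $D_\infty$ as $D$, note that $f$ has infinite order, check $Z(D)=F\subseteq Z(D_\infty)$, and invoke Lemma~\ref{lem:2.1}. The only difference is that you spell out the shift-of-indices argument for the fixed-ring claim (which the paper simply asserts with ``we note that $D$ is the fixed division ring of $f$''), and your extra bookkeeping there, modulo a harmless slip about which $\lambda_j$ occur in $f(D_{-n})$, is a sound way to justify that step.
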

\begin{proof} We note that  $D$ is the fixed division ring of $f$ in $D_\infty$. Since  $F$  is contained in the center of $D_\infty$, the automorphism $f$ has infinite order. By Lemma~\ref{lem:2.1}, $Z(D_\infty((\lambda ,f)))=F.$\end{proof}

\begin{theorem}\label{thm:3.2} Let $D$ be a division ring with center $F$ and $S$ be a subset of $D$. Assume that $f(x_1,x_2, \ldots, x_m)\in D(x_1,x_2,\cdots, x_m)\backslash D$ is a GARI of $S$. If $f(c_1,c_2,\cdots, c_m)\in F$ for some $(c_1,c_2,\cdots,c_m)\in S^m$, then $f$ is a non-trivial GARI.
\end{theorem}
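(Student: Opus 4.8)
The plan is to argue by contradiction. Suppose $f$ is a GARI of $S$ that takes a value in $F$ at some point of $S^m$, yet $f$ is \emph{trivial}, i.e.\ $f$ is a GARI of every division ring $D_1$ with center $F$ that contains all the coefficients of $f$. The strategy is to apply this hypothesis to the large division ring $\Delta = D_\infty((\lambda,f))$ built just above, whose center is exactly $F$ by Proposition~\ref{pro:2.5}, and which contains $D$ (hence all coefficients of the generalized rational polynomial $f$). The point of passing to $\Delta$ is that it is ``generic'' enough to detect non-triviality: by the standard theory of generalized rational identities over division rings (Amitsur's theorem, as in \cite[Chapter 8]{Rowen} or \cite[Chapter 7]{beidar}), if $f$ is not identically a rational \emph{identity}, then $f$ does not vanish on all of $\Delta^m$ where defined; moreover one can exhibit explicit substitutions involving the central variable $\lambda$ at which $f$ is defined and nonzero.

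The key steps, in order, are as follows. First, since $f \notin D$, Lemma~\ref{2.3} applied in the opposite direction shows that $f$ cannot be a GRI equal to a scalar: there is a division ring containing the coefficients on which $f$ is not constant. Second — this is the crux — I would produce, for each fixed $(c_1,\dots,c_m)\in S^m$ at which $f(c_1,\dots,c_m)\in F$, a substitution in $\Delta^m$ at which $f$ is defined and yields a value \emph{not} algebraic over $F$. The natural candidate is to plug in $c_i + \lambda$ (or $c_i(1+\lambda)$, or $\lambda^{e_i}c_i$ for suitable exponents) — that is, to perturb the given point $c$ by the central-over-$D$ but transcendental-over-$F$ series variable $\lambda$. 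Because $f$ is not a rational identity, the perturbed value $f(c_1+\lambda,\dots,c_m+\lambda)$, expanded as a Laurent series in $\lambda$ with coefficients in $D_\infty$, is not a constant series; writing it as $u + (\text{higher order in }\lambda)$ with leading correction term of positive $\lambda$-degree, Lemma~\ref{3.1} (and its evident generalization: a nonconstant Laurent series over $D_\infty$ with center $F$ cannot be algebraic over $F=Z(\Delta)$, which follows verbatim from the minimal-polynomial argument in the proof of Lemma~\ref{3.1}) shows this value is transcendental over $F$. Third, conclude: $\Delta$ is a division ring with center $F$ containing all coefficients of $f$, yet $\Delta$ does not satisfy the GARI $f$ (the substitution just constructed is a witness), so $f$ is a non-trivial GARI — contradicting the assumption, and completing the proof.

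The main obstacle is the second step: one must verify that the perturbation by $\lambda$ actually lands in the domain of definition of $f$ (the denominators, being nonzero rational expressions evaluated at a point with a transcendental parameter, remain invertible in the Laurent-series division ring — this should follow from the fact that $D_\infty((\lambda,f))$ is a division ring together with the observation that substituting $c_i+\lambda$ cannot annihilate a nonzero element of $D_\infty\langle X\rangle$ specialized this way), and that the resulting value genuinely has a $\lambda$-term of positive degree rather than collapsing to something in $F$. Handling the denominators carefully — i.e.\ checking that $f$ is defined at the perturbed tuple precisely because $f \notin D$ forces the relevant denominators to specialize to nonzero series — is where the technical care is needed; the transcendence of the output then drops out of Lemma~\ref{3.1}.
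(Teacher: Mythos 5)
Your overall architecture (pass to a large Laurent--series division ring with center $F$, expand $f$ at a perturbation of the given point $c$, subtract the constant term $f(c)\in F$, and use the minimal--polynomial argument of Lemma~\ref{3.1} to force transcendence) is the same as the paper's, but your second step --- the crux --- has a genuine gap. The claim that ``$f$ is not a rational identity, hence $f(c_1+\lambda,\ldots,c_m+\lambda)$ is a non-constant series'' is false, because you perturb every coordinate by the \emph{same} element $\lambda$, which commutes with all of $D$ in $D_\infty((\lambda,\mathrm{shift}))$. Concretely, take $f(x_1,x_2)=x_1x_2-x_2x_1\notin D$ and a commuting pair $c_1,c_2\in S$ (e.g.\ $c_1=c_2$), so $f(c_1,c_2)=0\in F$: then $f(c_1+\lambda,c_2+\lambda)=c_1c_2-c_2c_1=0$ identically, and the same collapse happens for $c_i(1+\lambda)$ and $\lambda^{e_i}c_i$; likewise $f(x)=ax-xa$ evaluated at any $\lambda$-perturbation of a point $c$ centralizing $a$ gives $0$. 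So none of your candidate substitutions need produce a non-algebraic value, and you give no general reason why $\Delta=D_\infty((\lambda,\mathrm{shift}))$, built from $D$ alone, contains \emph{any} witness: Amitsur-type generalized rational identity theory controls vanishing, not algebraicity, and says nothing about the specific $\lambda$-perturbations of $c$ you propose.

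What is missing is genericity of the perturbation directions, and supplying it is exactly what the paper does differently. It first adjoins $m$ fresh indeterminates, forming $K=D(y_1,\ldots,y_m)$, and takes $L=K_\infty((\lambda,\mathrm{shift}))$, so that $Z(L)=F$ by Proposition~\ref{pro:2.5} while $L\supseteq D$; the substitution $1+y_i\lambda$ is made inside the subring $K((\lambda))$, where $\lambda$ centralizes $K$, so Lemma~\ref{3.1} applies verbatim. Chiba's Lemma~7 \cite{chiba_88} then both settles definedness (your denominator worry) and guarantees that in the expansion $f(\cdots)=f(c_1,\ldots,c_m)+\sum_{j\ge 1}f_j(y_1,\ldots,y_m)\lambda^j$ some coefficient $f_{j_0}$ is a \emph{nonzero generalized polynomial}; since the $y_i$ are generic in $K$, that coefficient does not vanish, so algebraicity of the value over $F$ would, after removing the constant term $f(c)\in F$, contradict Lemma~\ref{3.1}. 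Without the generic $y_i$ (or some substitute producing a point of your Laurent ring at which a nonzero generalized polynomial over $D$ is guaranteed not to vanish), your argument does not go through.
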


\begin{proof} It suffices to find a division ring $L$ containing $D$ such that $f$ is not a GARI of $L$. Let $K=D(y_1,y_2,\cdots,y_m)$ and  $L=K_{\infty}((\lambda,f))$.  By Proposition~\ref{pro:2.5}, $Z(L)=Z(K)=Z(D)=F$. Consider the division subring $K((\lambda))$ of $L$. By  Lemma~\ref{lem:2.1}, $F((\lambda))$ is the center of $K((\lambda))$. In view of \cite[Lemma 7]{chiba_88}, 
$$f(1+y_1\lambda,1+y_2\lambda, \cdots, 1+y_m\lambda)=f(c_1,c_2,\cdots,c_m)+\sum_{j=1}^\infty f_j(y_1,y_2,\cdots, y_m)\lambda^j,$$
 where $f_j(y_1,y_2,\cdots, y_m)$ are generalized polynomials over $D$ and there is $j_0$  such that $f_{j_0}(y_1,y_2,\cdots, y_m) \not\equiv 0$. Since $f(c_1,c_2,\cdots,c_m)\in F$, if 
	$$f(1+y_1\lambda,1+y_2\lambda, \cdots, 1+y_m\lambda)$$
	is algebraic over $F$, then $\sum_{j=1}^\infty f_j(y_1,y_2,\cdots, y_m)\lambda^j$ is algebraic over $F$ too. By Lemma~\ref{3.1}, $f_j(y_1,y_2,\cdots, y_m)\equiv 0$ for every $j\ge 1$. In particular, we have 
	$$f_{j_0}(y_1,y_2,\cdots, y_m)\equiv 0,$$
 a contradiction. Thus, $f(1+y_1\lambda,1+y_2\lambda, \cdots, 1+y_m\lambda)$ is not algebraic over $F$. Therefore, $f$ is not a GARI of $L$.
\end{proof}
Recall that for a division ring $D$ with center $F$, an element $f\in D(X)$ is called a {\it generalized power central rational identity} (shortly, GPCRI) of a subset $S$ of $D$ if $f$ satisfies the following condition: if $f$ is defined at $(c_1,c_2,\cdots,c_m)\in S^m$, then $f(c_1,c_2,\cdots,c_m)^p\in F$ for some positive integer $p$ (see \cite{chiba_88}). Moreover, if $f^p\notin F$ for any positive integer $p$, then we say that $S$ satisfies a non-trivial  GPCRI $f$.
\begin{corollary}\label{cor:3.4} Let $D$ be a division ring with center $F$ and  $f(x_1,x_2,\cdots,x_m)$ be a generalized rational polynomial over $D$. Assume that $S$ is a subset of $D$ such that $f$ is defined at least at an $m$-tuple $(c_1,c_2,\cdots,c_m)\in S^m$. If $f$ is a non-trivial GPCRI of $S$, then $f$ is a non-trivial GARI of $S$.
\end{corollary}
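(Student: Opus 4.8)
The plan is to deduce this from Theorem~\ref{thm:3.2}. I would begin with two easy observations. First, $f$ is automatically a GARI of $S$: if $f$ is defined at some $(d_1,\dots,d_m)\in S^m$, then, $f$ being a GPCRI of $S$, $f(d_1,\dots,d_m)^{q}\in F$ for some positive integer $q$, so $f(d_1,\dots,d_m)$ is a root of $t^{q}-f(d_1,\dots,d_m)^{q}\in F[t]$ and hence algebraic over $F$; thus only the non-triviality of this GARI needs to be established. Second, $f\notin D$: if $f=d\in D$, then $d$ is defined at the given tuple $(c_1,\dots,c_m)\in S^m$, so $d^{q}\in F$ for some $q$ by the GPCRI property, i.e. $f^{q}\in F$, contradicting the non-triviality of the GPCRI $f$. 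The real difficulty is that Theorem~\ref{thm:3.2} requires the value of $f$ at some point of $S^{m}$ to lie in $F$, while the hypothesis only delivers this after raising to a power.

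To circumvent this, I would fix a positive integer $p$ with $f(c_1,\dots,c_m)^{p}\in F$ — one exists since $f$ is a GPCRI of $S$ and is defined at $(c_1,\dots,c_m)$ — and pass to $g:=f^{p}\in D(X)$. Then $g(c_1,\dots,c_m)=f(c_1,\dots,c_m)^{p}\in F$, and, the key point, $g\notin D$: if $g=f^{p}=e\in D$, then evaluating at $(c_1,\dots,c_m)$ forces $e=f(c_1,\dots,c_m)^{p}\in F$, so $f^{p}\in F$, again contradicting non-triviality. Now $g\in D(X)\setminus D$ is defined at $(c_1,\dots,c_m)\in S^m$ with $g(c_1,\dots,c_m)\in F$, so I would rerun the argument in the proof of Theorem~\ref{thm:3.2} with $g$ in place of $f$ — that argument uses only these three facts — obtaining the division ring $L\supseteq D$ constructed there over $K=D(y_1,\dots,y_m)$, which has $Z(L)=F$ and at which $g$ is not a GARI; more precisely, $g$ is defined at the perturbed tuple $\mathbf t=(c_1+y_1\lambda,\dots,c_m+y_m\lambda)\in L^{m}$ (by \cite[Lemma~7]{chiba_88}, since $g$ is defined at $(c_1,\dots,c_m)$) and $g(\mathbf t)$ is not algebraic over $F$.

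To finish I would transfer this back to $f$. As $f$ is defined at $(c_1,\dots,c_m)$, \cite[Lemma~7]{chiba_88} shows $f$ is also defined at $\mathbf t$, and from $f(\mathbf t)^{p}=g(\mathbf t)$ not algebraic over $F$ it follows that $f(\mathbf t)$ is not algebraic over $F$, since any $p$-th power of an element algebraic over $F$ is again algebraic over $F$ (it lies in the finite-dimensional commutative $F$-subalgebra generated by that element). Hence $f$ is not a GARI of $L$; and since $L\supseteq D$ contains all coefficients of $f$ and $Z(L)=F$, the GARI $f$ of $S$ is non-trivial, completing the argument.

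The step I expect to be the crux is exactly this reduction from power-centrality at $(c_1,\dots,c_m)$ to centrality there: Theorem~\ref{thm:3.2} cannot be quoted directly, and everything hinges on the surrogate $g=f^{p}$ still lying outside $D$ — which is precisely where the non-triviality of the GPCRI is used. The remaining bookkeeping, that the division ring produced by the proof of Theorem~\ref{thm:3.2} applied to $g$ still contains the coefficients of $f$ and still keeps $f$ defined at the test tuple, is automatic, since that construction merely enlarges $D$ and perturbs a tuple at which $f$ is already defined.
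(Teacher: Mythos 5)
Your argument is essentially the paper's own proof: both pass to $g=f^{p}$ with $p$ chosen so that $f(c_1,\dots,c_m)^{p}\in F$, observe $g\notin D$ from the non-triviality of the GPCRI, and then apply Theorem~\ref{thm:3.2} (or its proof) to $g$. The only difference is that you make explicit the step the paper compresses into ``it suffices to prove that $g=f^{p}$ is a non-trivial GARI'' --- namely transferring non-algebraicity from $g(\mathbf{t})=f(\mathbf{t})^{p}$ back to $f(\mathbf{t})$ --- which is extra care within the same route, not a different approach.
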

\begin{proof} Assume that $f$ is a non-trivial GPCRI of $S$. Then, it is obviously that $f$ is a GARI of $S$. Now we will show that $f$ is a non-trivial GARI of $S$. It suffices to prove that $g=f^p$ is a non-trivial GARI of $S$ for some positive integer $p$. Assume that $(c_1,c_2\cdots,c_m)\in S^m$ such that $f$ is defined at $(c_1,c_2,\cdots, c_m)$. Then there exists $p>0$ such that $f(c_1,c_2,\cdots,c_m)^p\in F$. Put $g=f^p$. It is clear that $g$ is a GARI of $S$ and $g(c_1,c_2,\cdots,c_m)\in F$. One has $g\notin D$ (since $f$ is non-trivial GPCRI of $S$), so that $g$ is a non-trivial GARI of $S$.
\end{proof}

 Let $D$ be a division ring with  center $F$ and $S$ be a  subset of $D^*$. We say that $S$ satisfies a non-trivial GARI of bounded degree if there exists  a non-trivial GARI $f(x_1,x_2,\ldots,x_m)$ of $S$ over $D$ such that for all $(c_1, \ldots, c_m)\in S^m, f(c_1, \ldots, c_m)$ are algebraic over $F$ of bounded degree whenever $f(c_1, \ldots, c_m)$ are defined.

\begin{corollary}\label{cor:3.5} Let $D$ be a division ring with center $F$ and $f(x_1,x_2,\cdots,x_m)$ be a generalized rational polynomial over $D$. Assume that $S$ is a subset of $D$ such that $f$ is defined at least at an $m$-tuple $(c_1,c_2,\cdots,c_m)\in S^m$. If $f$ is a non-trivial GRI of $S$, then $f$ is a non-trivial GARI of $S$ of degree $1$.
\end{corollary}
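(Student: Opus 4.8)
The plan is to deduce this directly from Theorem~\ref{thm:3.2}, since the substantive work has already been done there. First I would record the elementary observation that any GRI of $S$ is automatically a GARI of $S$: if $f$ is defined at an $m$-tuple $(c_1,\ldots,c_m)\in S^m$, then $f(c_1,\ldots,c_m)=0$, and $0$ is algebraic over $F$ of degree $1$ (its minimal polynomial over $F$ is $x$). This simultaneously shows that $f$ is a GARI of $S$ and that every value $f(c_1,\ldots,c_m)$ with $(c_1,\ldots,c_m)\in S^m$ in its domain is algebraic over $F$ of degree (at most) $1$. Hence the ``degree $1$'' part of the conclusion is immediate, and the only thing that remains is to establish non-triviality of $f$ as a GARI.

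Next I would verify the hypothesis $f\notin D$ needed to apply Theorem~\ref{thm:3.2}. Suppose to the contrary that $f=d\in D$. Since by assumption $f$ is defined at some $(c_1,\ldots,c_m)\in S^m$ and $f$ is a GRI of $S$, we get $d=f(c_1,\ldots,c_m)=0$; but then $f\equiv 0$ is a GRI of every division ring containing its (zero) coefficients, contradicting the hypothesis that $f$ is a \emph{non-trivial} GRI of $S$. Therefore $f\in D(x_1,\ldots,x_m)\setminus D$.

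Finally I would apply Theorem~\ref{thm:3.2} to the same subset $S$, the rational expression $f\in D(x_1,\ldots,x_m)\setminus D$, and the tuple $(c_1,\ldots,c_m)$: $f$ is a GARI of $S$ by the first step, and $f(c_1,\ldots,c_m)=0\in F$, so the theorem yields that $f$ is a non-trivial GARI. Combining this with the first paragraph gives that $f$ is a non-trivial GARI of $S$ of degree $1$.

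I do not expect any real obstacle; the entire content sits in Theorem~\ref{thm:3.2}. The only point that calls for a moment's care is ruling out the degenerate case $f\in D$, and that is precisely where the two standing hypotheses — that $f$ is defined at least at one point of $S^m$ and that the GRI is non-trivial — are used.
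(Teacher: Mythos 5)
Your proposal is correct and follows essentially the same route as the paper's own proof: note that every value of $f$ on $S^m$ is $0$ (hence algebraic of degree $1$), rule out $f\in D$ using the non-triviality of the GRI, and then invoke Theorem~\ref{thm:3.2} at the point $(c_1,\ldots,c_m)$ with $f(c_1,\ldots,c_m)=0\in F$. If anything, you spell out the ``degree $1$'' bookkeeping slightly more explicitly than the paper does, but there is no substantive difference.
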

\begin{proof} Assume that $f$ is a non-trivial GRI of $S$. Then, $f(c_1, \ldots, c_m)=0$, where $(c_1, \ldots, c_m)\in S^m$ such that $f$ is defined at $(c_1,c_2,\cdots,c_m)$. If $f\in D$, then $f=0$ that is impossible since $f$ is non-trivial GRI of $S$. Hence $f\not\in D$. In view of Theorem \ref{thm:3.2}, it follows that $f$ is a non-trivial GARI of $S$ of degree $1$.
\end{proof}

Now, we are ready to get the important theorem in this note.

\begin{theorem}\label{thm:1.1} Let $D$ be a division ring with infinite center $F$ and assume that $N$ is a non-central subnormal subgroup of $D^*$. 
	If $N$ satisfies a non-trivial GARI of bounded degree $d$, then $D$ is centrally finite, i.e. $D$ is a finite-dimensional vector space over $F$.
\end{theorem}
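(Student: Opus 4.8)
The plan is to reduce the statement to the already known case of ordinary generalized rational identities of subnormal subgroups. Assume that $D$ is \emph{not} centrally finite; I will derive a contradiction. Let $f(x_1,\ldots,x_m)$ be a non-trivial GARI of $N$ all of whose values on $N$ (where defined) are algebraic over $F$ of degree $\le d$. Since $D$ is infinite dimensional over $F$ and $N$ is a non-central subnormal subgroup of $D^*$, $f$ is defined at some $(c_1,\ldots,c_m)\in N^m$ by the remark preceding Lemma~\ref{2.3} (see \cite{chiba}). Now introduce $d$ new non-commuting indeterminates $y_1,\ldots,y_d$ and set
$$h(x_1,\ldots,x_m,y_1,\ldots,y_d):=g_d\bigl(f(x_1,\ldots,x_m),y_1,\ldots,y_d\bigr)\in D(x_1,\ldots,x_m,y_1,\ldots,y_d),$$
obtained by substituting the single element $f(x_1,\ldots,x_m)$ of $D(x_1,\ldots,x_m)$ for the first variable of the polynomial $g_d$. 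If $h$ is defined at a point of $N^{m+d}$, say at $(c_1,\ldots,c_m,b_1,\ldots,b_d)$, then $f(c_1,\ldots,c_m)$ is defined, hence algebraic over $F$ of degree $\le d$, so Lemma~\ref{2.3} gives $h(c_1,\ldots,c_m,b_1,\ldots,b_d)=0$. Thus $h$ is a GRI of $N$ over $D$, and it is not vacuous since it is defined at $(c_1,\ldots,c_m,1,\ldots,1)\in N^{m+d}$.

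Next I would check that $h$ is a \emph{non-trivial} GRI of $N$. Since $f$ is a non-trivial GARI there is a division ring $D_1$ with center $F$ containing all coefficients of $f$ — and hence all coefficients of $h$ — together with a point $(a_1,\ldots,a_m)\in D_1^m$ at which $f$ is defined and $f(a_1,\ldots,a_m)$ is transcendental over $F$. In particular $f(a_1,\ldots,a_m)$ is not algebraic over $F$ of degree $\le d$, so by the contrapositive of Lemma~\ref{2.3} applied inside $D_1$ there are $b_1,\ldots,b_d\in D_1$ with $g_d(f(a_1,\ldots,a_m),b_1,\ldots,b_d)\ne 0$; that is, $h$ is defined and nonzero at $(a_1,\ldots,a_m,b_1,\ldots,b_d)\in D_1^{m+d}$. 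Hence $h$ is not a GRI of $D_1$, so $h$ is a non-trivial GRI of $N$.

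Finally I would invoke the theorem that a non-central subnormal subgroup of $D^*$ with $Z(D)=F$ infinite cannot satisfy a non-trivial generalized rational identity once $D$ is infinite dimensional over $F$ (this is the content of \cite{chiba,Pa_Ma_88}, ultimately resting on Amitsur's theorem on generalized rational identities); applied to $h$ this contradicts the previous paragraph, so $D$ must be centrally finite. Alternatively, to keep the argument self-contained one re-proves this last assertion by the method already used for Theorem~\ref{thm:3.2}: using that $F$ is infinite and that a non-central subnormal subgroup generates $D$ as a division ring, one transfers $h$ from $N$ up the subnormal chain to a generalized rational identity of $D$ itself, and then specializes $x_i\mapsto 1+z_i\lambda$ into a skew Laurent series extension of $D$ with center $F$ of the type constructed before Proposition~\ref{pro:2.5}, contradicting non-triviality via Lemma~\ref{3.1}.

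The step I expect to be the real obstacle is this transfer of the identity from the subgroup $N$ to all of $D$: this is where both the subnormality of $N$ and the infinitude of $F$ are genuinely used, via the structure theory of subnormal subgroups (a non-central one has centralizer exactly $F$ and generates $D$ as a division ring) together with a Zariski-type density argument over the infinite field $F$, following Chiba and Markar-Limanov. By comparison the first two paragraphs are routine; the only point there needing care is that the passage from the GARI $f$ to the GRI $h$ preserves non-triviality, which is exactly what the second paragraph records.
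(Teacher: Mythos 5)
Your proposal is correct and follows essentially the same route as the paper: form $w=g_d(f(x_1,\ldots,x_m),y_1,\ldots,y_d)$, use Lemma~\ref{2.3} to see it is a GRI of $N$ and (via the division ring $D_1$ witnessing non-triviality of the GARI $f$) a non-trivial GRI, then conclude by Chiba's theorem on generalized rational identities of subnormal subgroups. Your second paragraph is in fact a slightly more explicit version of the paper's non-triviality step, and the ``self-contained alternative'' you sketch at the end is not needed for the argument.
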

\begin{proof} Assume that $N$ satisfies the non-trivial GARI $f(x_1,x_2,\cdots, x_m)$ of degree $\le d$. Consider 
$$g_d(x,y_1,y_2,\cdots, y_d)=\sum\limits_{\sigma  \in {S_{d + 1}}} {\sign(\sigma ).{x^{\sigma (0)}}{y_1}{x^{\sigma (1)}}{y_2}{x^{\sigma (2)}} \ldots {y_d}{x^{\sigma (d)}}}$$ as in Lemma \ref{2.3},  and put $$w(x_1,x_2,\cdots,x_m, y_1,y_2,\cdots, y_d)=g_d(f(x_1,x_2,\cdots,x_m),y_1,y_2,\cdots, y_d).$$ 
Assume that $(c_i)\in N^m$ such that $f$ is defined at $(c_i)$. Since $f(c_1,c_2,\cdots,c_m)$ is algebraic over $F$ of degree $\le d$, by Lemma~\ref{2.3}, $$w(c_1,c_2,\cdots,c_m, r_1,r_2,\cdots, r_d)=0$$ for every $(r_i)\in D^d$. In particular, $w=0$ is a GRI of $N$. Since $f$ is a non-trivial GARI, there exists a division ring $D_1$ with center $F$ such that $D_1$ contains all coefficients of $f$ and $f$ is not a GARI of $D_1$. By Lemma~\ref{2.3}, $w\not\equiv 0$, which implies that $w=0$ is a non-trivial GRI of $N$. Now in view of \cite[Theorem 1]{chiba}, $D$ is centrally finite. Thus, the proof of Theorem \ref{thm:1.1} is now complete.
\end{proof}

In view of  corollaries~\ref{cor:3.4} and~\ref{cor:3.5}, one can see that the result of Theorem~\ref{thm:1.1} is more general than the results obtained in \cite[Theorem 1]{chiba} and \cite{Pa_Ma_88}. 

\begin{remark} 1) The  non-triviality of $f$  in Theorem~\ref{thm:1.1} is essential. For instance, let $D$ be a centrally infinite division ring,  and assume that $a\in D^*$ is an algebraic element of degree $\le d$ over the center $F$ of $D$. Then, for any $b\in D^*$, the element $bab^{-1}$ is always algebraic over $F$ of degree $\le d$. This means that $f(x)=xax^{-1}$ is a GARI of $D^*$ of degree $d$ while $[D: F]=\infty$.  
	\bigskip
		
		2) Thereom~\ref{thm:1.1} does not give any evaluation of $\dim_FD$ in terms of $d$ yet. This problem seems to be quite interesting and remains still open in general even in the case when the GARI is a GPCRI \cite[Page 137]{her2}. However, this estimation was obtained for some particular cases of GARIs of bounded degree over $D^*$. For instance, in \cite[Theorem 4]{ak}, it was proved that if  all additive commutators $xy-yx$ ($x,y\in D^*$) are algebraic over $F$ of bounded degree $d$, then $\dim_FD\le \frac{(1+d)^4}{4}$. Later, in \cite[Theorem 3]{chebo}, a better  evaluation $\dim_FD\le d^2$ was obtained. Also in \cite{chebo}, there are analogue results for multiplicative commutators in a division ring. Namely, if there exists an integer $d$ such that $\dim_FF(xyx^{-1}y^{-1})\le d$ for all $x,y\in D^*$, then $\dim_FD\le d^2$ \cite[Theorem 6]{chebo}.  In the case when $\Char(D)=0$, the result is even stronger, stating that if there exists a non-central element $a\in D$ and an integer $d$ such that $\dim_FF(axa^{-1}x^{-1})\le d$, then $\dim_FD\le d^2$ \cite[Theorem 7]{chebo}. Skipping the evaluation of $\dim_FD$ in terms of $d$, we shall extend partially the above results for a non-central subnormal subgroup $N$ of $D^*$.		
\end{remark} 

The following corollary is a broad extension of \cite[Corollary 3]{mah} and the Jacobson Theorem \cite[Theorem 7]{jab}.
\begin{corollary}\label{cor:1.1} Let $D$ be a division ring with center $F$ and assume that $N$ is a non-central subnormal subgroup of $D^*$. If all elements of $N$ are algebraic over $F$ of bounded degree $d$, then $D$ is centrally finite. Moreover, under the additional condition $\Char(D)=0$ and $N$ is normal in $D^*$, we have $\dim_FD\le d^2$.
\end{corollary}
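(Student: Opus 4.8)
The plan is to read off both assertions from results already available in the note: the first from Theorem~\ref{thm:1.1}, after exhibiting the cheapest possible non-trivial GARI of bounded degree on $N$, and the second from the multiplicative-commutator theorem \cite[Theorem 7]{chebo}, after a normality trick.

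For the first assertion, I would take $m=1$ and $f(x_1)=x_1\in D(x_1)\setminus D$. By hypothesis every defined evaluation $f(c_1)=c_1$ with $c_1\in N$ is algebraic over $F$ of degree $\le d$, so $f$ is a GARI of $N$ all of whose values on $N$ are algebraic over $F$ of bounded degree $d$. Since $1\in N$ and $f(1)=1\in F$, Theorem~\ref{thm:3.2} then shows that $f$ is moreover a \emph{non-trivial} GARI of $N$. Hence $N$ satisfies a non-trivial GARI of bounded degree $d$. If $F$ is infinite, Theorem~\ref{thm:1.1} immediately gives that $D$ is centrally finite. If $F$ is finite, then for each $c\in N$ the field $F(c)$ is finite, so $c$ has finite multiplicative order; thus $N$ is a periodic subnormal subgroup of $D^*$ and hence is central by the classical fact that periodic subnormal subgroups of $D^*$ lie in $Z(D)$. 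This contradicts the non-centrality of $N$, so the case $F$ finite does not arise, and the first assertion is proved.

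For the ``moreover'' part, assume $\Char(D)=0$ (so $F$ is automatically infinite) and that $N$ is normal in $D^*$. Pick $a\in N\setminus F$, which is possible since $N$ is non-central. For every $x\in D^*$, normality of $N$ gives $xa^{-1}x^{-1}\in N$, so $axa^{-1}x^{-1}=a\,(xa^{-1}x^{-1})\in N$ is algebraic over $F$ of degree $\le d$; that is, $\dim_F F(axa^{-1}x^{-1})\le d$ for all $x\in D^*$. Since $a$ is non-central and $\Char(D)=0$, \cite[Theorem 7]{chebo} applies and yields $\dim_F D\le d^2$.

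No long computation is needed; the statement is essentially a translation between the hypotheses of the cited theorems and the language of GARIs. The two places that require care, and which I would call the main (modest) obstacle, are (i) justifying that the innocuous identity $f(x)=x$ is genuinely \emph{non-trivial} --- this is exactly the content of Theorem~\ref{thm:3.2} and, via the construction in its proof, does not even use that $F$ is infinite --- and (ii) observing that normality of $N$ pulls every multiplicative commutator $axa^{-1}x^{-1}$, with the fixed $a\in N$, back into $N$, which is what converts ``algebraic of bounded degree on $N$'' into the precise hypothesis of \cite[Theorem 7]{chebo}. The only ingredient external to this note is the classical theorem on periodic subnormal subgroups, used only to dismiss the degenerate case of finite $F$.
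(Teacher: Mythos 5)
Your proof is correct and follows essentially the same route as the paper: dismiss the finite-center case via periodicity of $N$ and the classical result on periodic subnormal subgroups, apply Theorem~\ref{thm:1.1} for central finiteness, and use normality to feed the commutators $axa^{-1}x^{-1}$ into \cite[Theorem 7]{chebo} for the bound $\dim_FD\le d^2$. In fact you are slightly more explicit than the paper, which leaves the choice of the GARI $f(x)=x$ and its non-triviality (via Theorem~\ref{thm:3.2} and $f(1)=1\in F$) implicit when invoking Theorem~\ref{thm:1.1}.
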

\begin{proof}If the center $F$ is finite then every element of $N$ is torsion. By \cite[Theorem 8]{her}, $N$ is central which contradicts the hypothesis. Hence, $F$ is infinite. By Theorem~\ref{thm:1.1}, $D$ is centrally finite.
	
	Assume that $\Char(D)=0$ and $N$ is normal in $D^*$. Now let $a\in N\backslash F$. For any $x\in D^*$, $axa^{-1}x^{-1}\in N$ is algebraic over $F$ of degree $\le d$. By \cite[Theorem 7]{chebo}, $\dim_FD\le d^2$. The corollary is completed. \end{proof}

The following result extends partially \cite[Theorem 4]{ak}.
\begin{corollary}\label{cor:1.3} Let $D$ be a division ring with infinite center $F$ and assume that $N$ is a non-central subnormal subgroup of $D^*$. If  $xy-yx$ are algebraic over $F$ of bounded degree for all $x, y\in N$, then $D$ is centrally finite.
\end{corollary}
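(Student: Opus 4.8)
The plan is to reduce Corollary~\ref{cor:1.3} to Theorem~\ref{thm:1.1} by exhibiting an explicit non-trivial GARI of bounded degree satisfied by $N$. The natural candidate is the additive commutator itself, viewed as a generalized rational (indeed generalized) polynomial: set $f(x_1,x_2)=x_1x_2-x_2x_1\in D\langle x_1,x_2\rangle\subseteq D(x_1,x_2)$. By hypothesis, for every $(c_1,c_2)\in N^2$ the value $f(c_1,c_2)=c_1c_2-c_2c_1$ is algebraic over $F$ of degree bounded by some fixed $d$; since $f$ is a genuine polynomial it is defined everywhere, so $f$ is a GARI of $N$ of bounded degree $d$ in the sense defined just before Corollary~\ref{cor:3.5}.

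First I would check that $f\notin D$: indeed $x_1x_2-x_2x_1$ is a nonzero element of the free product $D\langle x_1,x_2\rangle$ lying outside the copy of $D$, because the monomials $x_1x_2$ and $x_2x_1$ are $F$-linearly independent in $D\langle X\rangle$. Next I must establish non-triviality as a GARI, i.e. produce a division ring $D_1$ with center $F$ containing the coefficients of $f$ (here the only coefficients are $\pm 1\in F$, so any $D_1$ with center $F$ works) for which $f$ is not a GARI of $D_1$. For this one picks $D_1$ to be a division ring with center $F$ that is infinite-dimensional over $F$ and in which additive commutators are not algebraic of bounded degree — for example a Weyl-type or skew-field-of-fractions construction such as $D_1=F(t)((s,\theta))$ with $\theta$ an automorphism of infinite order, or more simply the universal construction $D_1=F(x_1,x_2)$ (the universal division ring of fractions of the free algebra), where the additive commutator of the generators is transcendental over $F$. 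Concretely, in such a $D_1$ the element $x_1x_2-x_2x_1$ fails to be algebraic over $F$, so $f$ is not a GARI of $D_1$, proving $f$ is a non-trivial GARI.

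Having verified that $N$ satisfies the non-trivial GARI $f$ of bounded degree $d$, I would invoke Theorem~\ref{thm:1.1} directly: since $F$ is infinite by hypothesis and $N$ is a non-central subnormal subgroup of $D^*$, Theorem~\ref{thm:1.1} yields that $D$ is centrally finite, which is exactly the assertion of Corollary~\ref{cor:1.3}.

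The only genuinely non-routine point is the non-triviality verification, i.e.\ exhibiting a division ring $D_1$ with center $F$ in which the additive commutator of two elements is not algebraic of bounded degree (indeed not algebraic at all). This is where I expect the main obstacle to lie, though it is mild: one can fall back on the observation already recorded in the excerpt — in a division ring infinite-dimensional over its center, a non-trivial generalized rational expression that is not in $D$ always has an evaluation point, and by Lemma~\ref{3.1} together with the argument in Theorem~\ref{thm:3.2} one sees that after the substitution $x_i\mapsto 1+y_i\lambda$ the resulting series in $D_\infty((\lambda,f))$ has a nonzero higher term, hence is transcendental over $F$. Thus the non-triviality of $f$ as a GARI follows from the same machinery (Proposition~\ref{pro:2.5} and Lemma~\ref{3.1}) already assembled for Theorem~\ref{thm:3.2}, and no new construction is really needed beyond noting that $f=x_1x_2-x_2x_1\notin D$.
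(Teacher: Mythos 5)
Your reduction is correct and matches the paper's (extremely terse) proof in outline: the paper simply says the corollary follows directly from Theorem~\ref{thm:1.1}, with $f(x_1,x_2)=x_1x_2-x_2x_1$ serving as the non-trivial GARI of bounded degree. The one place where you diverge is the non-triviality check, which is indeed the only non-routine step. You propose to exhibit an explicit division ring $D_1$ (the free field $F(x_1,x_2)$, or a skew Laurent series field) in which the commutator of two elements is transcendental over the center; this works, but it forces you to know, e.g., that the center of $F(x_1,x_2)$ is $F$ and that $x_1x_2-x_2x_1$ is not algebraic there (provable via the embedding of the free algebra, but an extra verification you only assert). The paper's intended route, visible in the two corollaries following this one, is cheaper: since $N$ is a subgroup, $1\in N$, so $f(1,1)=0\in F$, and since $f\in D(x_1,x_2)\setminus D$, Theorem~\ref{thm:3.2} immediately yields that $f$ is a non-trivial GARI --- no new division ring needs to be produced by hand, because Theorem~\ref{thm:3.2} manufactures one (namely $K_\infty((\lambda,f))$) for you. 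Your closing remark gestures at exactly this fallback, but to make it precise you should state explicitly that the hypothesis of Theorem~\ref{thm:3.2} you are using is the existence of a tuple in $N^2$ (here $(1,1)$) at which $f$ is defined with value in $F$; with that sentence added, your argument and the paper's coincide. Either way the conclusion follows from Theorem~\ref{thm:1.1} since $F$ is assumed infinite, so your proposal is sound; the Theorem~\ref{thm:3.2} route is simply the shorter and more self-contained of the two.
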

\begin{proof} The corollary is from directly Theorem~\ref{thm:1.1}. \end{proof}

The next two corollaries are partially generalizations of \cite[Theorems 6 and 7]{chebo}.

\begin{corollary}\label{cor:1.2} Let $D$ be a division ring with center $F$ and assume that $N$ is a non-central subnormal subgroup of $D^*$. If $\Char(D)=0$ and there exists an element $a\not\in F$ such that $axa^{-1}x^{-1}$ are algebraic over $F$ of bounded degree for all $x\in N$, then $D$ is centrally finite.
\end{corollary}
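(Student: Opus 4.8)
The plan is to reduce Corollary~\ref{cor:1.2} to Theorem~\ref{thm:1.1} by exhibiting an appropriate non-trivial GARI of bounded degree satisfied by $N$. First I would dispose of the question of whether $F$ is finite: if $F$ were finite, then since $\Char(D)=0$ this cannot happen, so in fact $F$ has characteristic $0$ and is therefore infinite, which puts us in the setting of Theorem~\ref{thm:1.1}.

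The natural candidate for the GARI is
$$f(x) = axa^{-1}x^{-1},$$
a generalized rational polynomial over $D$ (its only coefficient is $a\in D$). By hypothesis, $f(c)$ is algebraic over $F$ of bounded degree for every $c\in N$ (note $f$ is defined at every element of $N$, since $c\in D^*$), so $f$ is a GARI of $N$ of bounded degree. The key point to check is non-triviality. Here I would invoke Theorem~\ref{thm:3.2}: it suffices to find a single $c\in N$ with $f(c)\in F$ and to observe $f\notin D$. For the first, take $c=a\in N$ (since $a\notin F$, we have $a\in N$ is available as an evaluation point when $a\in N$; if one only knows $a\notin F$ but not $a\in N$, one instead uses that $N$ is non-central and picks any $c\in N$ commuting with nothing problematic — but the cleanest route is $c=1$, giving $f(1)=1\in F$). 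The second, $f\notin D$, holds because $axa^{-1}x^{-1}$ genuinely involves the indeterminate $x$ and is not a constant element of $D$. Then Theorem~\ref{thm:3.2} yields that $f$ is a non-trivial GARI of $N$.

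With non-triviality in hand, $N$ satisfies a non-trivial GARI of bounded degree, so Theorem~\ref{thm:1.1} applies directly and gives that $D$ is centrally finite, completing the proof.

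The main obstacle I anticipate is the verification that $f(x)=axa^{-1}x^{-1}$ is a \emph{non-trivial} GARI rather than merely a GARI: one must produce an evaluation point in $N$ at which $f$ takes a value in $F$ so that Theorem~\ref{thm:3.2} can be triggered, and one must confirm $f\notin D$ (equivalently, that $f$ is not identically a scalar), which uses that $a\notin F$ so that conjugation by $x$ acts nontrivially. Once that is settled the rest is a formal citation of Theorem~\ref{thm:1.1}.
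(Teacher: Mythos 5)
Your proposal is correct and follows essentially the same route as the paper: take $w(x)=axa^{-1}x^{-1}$, note it is a GARI of $N$ of bounded degree, apply Theorem~\ref{thm:3.2} with the evaluation $w(1)=1\in F$ (and $w\notin D$ since $a\notin F$) to get non-triviality, and then invoke Theorem~\ref{thm:1.1}, using that $\Char(D)=0$ forces $F$ to be infinite.
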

\begin{proof} Put $w(x)=axa^{-1}x^{-1}$. Then $w$ is a GARI of $N$ of bounded degree. Using Theorem~\ref{thm:3.2}, $w$ is a non-trivial GARI of $N$ because $w(1)=1\in F$. Since $F$ is infinite, by Theorem~\ref{thm:1.1}, $D$ is centrally finite.
\end{proof}

\begin{corollary}\label{cor:1.2} Let $D$ be a division ring with center $F$ and assume that $N$ is a non-central subnormal subgroup of $D^*$. If $xyx^{-1}y^{-1}$ is algebraic over $F$ of bounded degree for any $x,y\in N$, then $D$ is centrally finite.
\end{corollary}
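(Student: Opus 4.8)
The strategy is to recognize the hypothesis as a non-trivial GARI of bounded degree and then quote Theorem~\ref{thm:1.1}. Set $w(x,y)=xyx^{-1}y^{-1}\in D(x,y)$. Since Theorem~\ref{thm:1.1} requires the center to be infinite, the first task is to reduce to the case where $F$ is infinite, in the spirit of the proof of Corollary~\ref{cor:1.1}. Suppose $F$ is finite. Then for all $x,y\in N$ the element $xyx^{-1}y^{-1}$ is algebraic over the finite field $F$ of degree at most $d$, hence lies in a finite subfield of $D$ of bounded order and therefore has multiplicative order dividing a fixed integer $n_0$ depending only on $|F|$ and $d$. Thus every commutator value in the derived subgroup $N'=[N,N]$ is $n_0$-torsion, and one argues that $N'$ is a periodic subnormal subgroup of $D^*$ (it is subnormal, being characteristic in the subnormal subgroup $N$). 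By \cite[Theorem 8]{her}, $N'$ is then central, so $N$ is nilpotent of class $\le 2$; but a nilpotent subnormal subgroup of $D^*$ is central, contradicting the non-centrality of $N$. Hence $F$ is infinite.

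Next I would verify that $w$ is a non-trivial GARI of $N$ of bounded degree. Visibly $w\in D(x,y)\setminus D$, since the expression $xyx^{-1}y^{-1}$ is not a constant, and $w$ is defined at every point of $N\times N$; by hypothesis $w(c_1,c_2)$ is algebraic over $F$ of bounded degree for all $(c_1,c_2)\in N\times N$, so $w$ is a GARI of $N$ of bounded degree. Since $w(1,1)=1\in F$, Theorem~\ref{thm:3.2}, applied with $S=N$, shows that $w$ is in fact a non-trivial GARI of $N$.

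Now all the hypotheses of Theorem~\ref{thm:1.1} are in place: $D$ has infinite center $F$, the subgroup $N$ is non-central and subnormal in $D^*$, and $N$ satisfies the non-trivial GARI $w$ of bounded degree. Theorem~\ref{thm:1.1} therefore gives that $D$ is centrally finite, as required. The substantive content of the argument is carried by Theorems~\ref{thm:3.2} and~\ref{thm:1.1}, into which one is merely substituting; the step I expect to be the main obstacle is the reduction to infinite $F$, and specifically confirming that the torsion of the commutator values really forces $N'$ to be a periodic (hence, by the cited results, central) group, so that the finite-center case is vacuous.
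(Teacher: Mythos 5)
Your main line --- setting $w(x,y)=xyx^{-1}y^{-1}$, noting it is a GARI of $N$ of bounded degree, invoking Theorem~\ref{thm:3.2} with $w(1,1)=1\in F$ to get non-triviality, and then applying Theorem~\ref{thm:1.1} --- is exactly the paper's argument. The divergence, and the problem, is in your reduction to the case of infinite $F$, which you yourself flag as the main obstacle. From the hypothesis you correctly get that every commutator value $aba^{-1}b^{-1}$ ($a,b\in N$) lies in a finite subfield of bounded size, hence is torsion of order dividing some fixed $n_0$. But the jump from ``all commutator values are $n_0$-torsion'' to ``$N'=[N,N]$ is a periodic group'' is a genuine gap: $N'$ consists of \emph{products} of commutators, and a product of torsion elements in $D^*$ need not be torsion (already in the real quaternions two elements of order $4$ can generate a non-periodic group). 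Without periodicity of $N'$ you cannot apply Herstein's theorem on torsion subnormal subgroups, so your finite-$F$ case does not close. (The subnormality of $N'$, via $N'$ characteristic in $N$, is fine; the missing piece is only the periodicity.)

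The paper avoids this by never trying to control all of $N'$: it observes that the bounded torsion of the values means $N$ satisfies the \emph{generalized group identity} $w(x,y)^n=1$ with $n\le |F|^d-1$, and then argues by contradiction that some commutator $aba^{-1}b^{-1}$ is non-central. Using \cite[Proposition 2.1]{bd} it passes to a centrally finite division subring $D_1$ with center $F_1$ (necessarily infinite, since $D_1$ is non-commutative) still containing that commutator outside $F_1$; intersecting the subnormal chain of $N$ with $D_1$ gives a subnormal subgroup $H_r$ of $D_1^*$ satisfying the same identity, and \cite[Theorem 3.1]{Pa_Bi_15} forces $H_r\subseteq F_1$, a contradiction. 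Hence all commutators of $N$ lie in $F$, so $N$ is solvable, and a solvable subnormal subgroup of $D^*$ is central by \cite[14.4.4]{scott}, contradicting non-centrality; therefore $F$ must be infinite. If you want to keep your own structure, you need some substitute for this identity-plus-descent argument; the periodicity claim as stated will not do the work.
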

\begin{proof} Put $w(x,y)=xyx^{-1}y^{-1}$. Then $w$ is a GARI of $N$ of bounded degree. Using Theorem~\ref{thm:3.2}, $w$ is a non-trivial GARI of $N$ because $w(1,1)=1\in F$. By Theorem~\ref{thm:1.1}, it suffices to show $F$ is infinite. Indeed, assume that $F$ is finite. Then, for any $a,b\in N$, the subfield $F(aba^{-1}b^{-1})$ of $D$ generated by $aba^{-1}b^{-1}$ over $F$ is finite, which implies that $aba^{-1}b^{-1}$ is torsion of order $n\le |F|^d-1$. Here, $|F|$ is the cardinality of $F$. Therefore, $N$ satisfies a generalized group identity $w(x,y)^n=1$. If there exists $a,b\in N$ such that $aba^{-1}b^{-1}=w(a,b)\not\in F$, then there exists a division subring $D_1$ of $D$ with center $F_1$ satisfying $D_1$ is centrally finite and $aba^{-1}b^{-1}\not \in F_1$ (\cite[Proposition 2.1]{bd}). Because $N$ is subnormal in $D^*$, there exist subgroups $N_1,N_2,\cdots,N_r$ of $D^*$ such that $$N=N_r \triangleleft N_{r-1}\triangleleft\cdots \triangleleft N_1=D^*.$$ Put $H_i=D_1\cap N_i$. Then we obtain that $$H_r \triangleleft H_{r-1}\triangleleft\cdots \triangleleft H_1=D_1^*.$$ It implies that $H_r$ is a subnormal subgroup of $D_1^*$ and $aba^{-1}b^{-1}\in H_r$. Observe that $F_1$ is infinite and since $H_r\subseteq N$ satisfies the group identities $w(x,y)^n=1$, by \cite[Theorem 3.1]{Pa_Bi_15}, $H_r\subseteq F_1$. In particular, $aba^{-1}b^{-1}\in F_1$. Contradiction! Hence, $aba^{-1}b^{-1}\in F$ for any $a,b\in N$, which implies that $N$ is solvable. In the view of \cite[14.4.4]{scott}, $N$ is central, which contradicts to the hypothesis. Thus, $F$ is infinite and the corollary is proved completely.
\end{proof}

\bigskip

\vspace*{0.5cm}

Bui Xuan Hai,                                                          \hspace*{4.2cm}Mai Hoang Bien,

Faculty of Mathematics and                                     \hspace*{2.0cm} Department of Basic Sciences 

Computer Science,                                                  \hspace*{3.4cm} University of Architecture,  

University of Science, VNU-HCM                         \hspace*{1.1cm} 196 Pasteur Str., Dist. 1, HCM-City, 

227 Nguyen Van Cu Str., Dist. 5,                           \hspace*{1.1cm} Vietnam

HCM-City, Vietnam                                               \hspace*{3.3cm}e-mail: maihoangbien012@yahoo.com

e-mail: bxhai@hcmus.edu.vn         \\ \\

Truong Huu Dung

Faculty of Mathematics and Computer Science

University of Science, VNU-HCM

227 Nguyen Van Cu Str., Dist. 5,                          

HCM-City, Vietnam 

e-mail: thdung@dnpu.edu.vn 

\begin{thebibliography}{}
\bibitem{ak} S. Akbari, M. Arian-Nejad and M. L. Mehrabadi, {\it On Additive Commutator Groups in Division Rings}, Result. Math. \textbf{33} (1998) 9--21.
 
\bibitem{beidar} K. I. Beidar, W. S. Martindale 3rd and A. V. Mikhalev, {\it Rings with Generalized Identities}, Marcel Dekker, Inc., New York-Basel-Hong Kong, 1996.

\bibitem{bell} P. J. Bell, V. Drensky, Y. Sharifi, \textit{Shirshov's theorem and division rings that are left algebraic over a subfield}. J. Pure Appl. Algebra \textbf{217} (2013), no. 9, 1605--1610.

\bibitem{Pa_Bi_15} M. H. Bien, {\it On some subgroups of $D^*$ which satisfy a generalized group identity}, Bull. Korean Math. Soc. \textbf{52} (2015), No. 4, pp. 1353--1363.


\bibitem{bien_13} M. H. Bien, {\it On normal subgroups of $D^*$ whose elements are periodic modulo the center of $D^*$ of bounded order}, Int. Electron. J. Algebra \textbf{16} (2014), 66--71.

\bibitem{bd} M. H. Bien, D. H. Dung, {\it On normal subgroups of division rings which are radical over a proper division subring}, 
Studia Sci. Math. Hungar. \textbf{51} (2014), no. 2, 231--242.

\bibitem{chebo} M. A. Chebotar,  Yuen Fong and Pjek-Hwee Lee, {\it On division rings with algebraic commutators of bounded degree}, Manuscripta Math. \textbf{113} (2004), 153--164.

\bibitem{chiba} K. Chiba, {\it Generalized rational identities of subnormal subgroups of skew fields}, Proc. Amer. Math. Soc. \textbf{124}(6) (1996), 1649--1653.

\bibitem{chiba_88} K. Chiba, \textit{Skew fields with a nontrivial generalised power central rational identity}, Bull. Austral. Math. Soc. \textbf{49} (1994), no. 1, 85--90.


\bibitem{gol-sha} E.S. Golod, I.R. Shafarevich, On the class field tower, {\em Izv. Akad. Nauk SSSR Mat. Ser.}  \textbf{28} (1964) 261-272 (in Russian).

\bibitem{her2} I. N. Herstein, {\it Algebraic valued functions on noncommutative rings}, J. Algebra. \textbf{36} (1975), 128--150. 

\bibitem{her} I. N. Herstein, {\it Multiplicative commutators in division rings}, Israel J. Math. \textbf{31} (1978), 180--188.

\bibitem{jab} N. Jacobson, {\it Structure theory for algebraic algebras of bounded degree}, Ann. of Math. \textbf{46}, (1945). 695--707.

\bibitem{kap} I. Kaplansky, \textit{Rings with a polynomial identity}, Bull. Amer. Math. Soc. \textbf{54} (1948). 575--580.

\bibitem{Kurosh} A. G. Kurosh, Problems in ring theory which are related to the Burnside problem for periodic groups, {\em Izv. Akad. Nauk SSSR}  \textbf{5}, No. 3 (1941), 233-240 (Russian).

\bibitem{lam} T. Y. Lam, {\it A First Course in Noncommutative Rings}, in: Grad. Texts in Math., Vol. \textbf{131}, Springer-Verlag, Berlin, 1991.

\bibitem{Pa_Ma_88} L. Makar-Limanov, \textit{On subnormal subgroups of skew fields}, J. Algebra \textbf{114} (1988), no. 2, 261--267.

\bibitem{mah} M. Mahdavi-Hezavehi, S. Akbari-Feyzaabaadi, M. Mehraabaadi and H. Hajie-Abolhassan, {\it On derived groups of division rings II}, Comm. Algebra \textbf{23} (1995), no. 8, 2881--2887.

\bibitem{Rowen} L. H. Rowen, {\it Polynomial identities in ring theory}, Academic Press, Inc., New York, 1980.

\bibitem{scott} W. R. Scott, {\it Group theory}, Dover Publications Inc., New York, second edition, 1987.

\bibitem{toma} G. M. Tomanov, {\it Generalized group identities in linear groups}, Math. USSR, Sbornik Vol. \textbf {51} (1985), 33--46.
\end{thebibliography}
\end{document}